\title{Asymptotic analysis and diffusion limit \\ %
  of the Persistent Turning Walker Model}
\author{%
  Patrick~\textsc{Cattiaux}, %
  Djalil~\textsc{Chafa\"\i}, %
  S\'ebastien~\textsc{Motsch}}
\date{Preprint -- October 2008 -- Revised September 2009}
\newtheorem{theorem}[equation]{Theorem}
\newtheorem{lemma}[equation]{Lemma}
\newtheorem{corollary}[equation]{Corollary}
\theoremstyle{definition}
\newtheorem{remark}[equation]{Remark}
\numberwithin{equation}{section}
\newcommand{\ote}{\underline{\theta}}
\newcommand{\cF}{\mathcal{F}}
\newcommand{\cW}{\mathcal{W}}
\newcommand{\dR}{\mathbb{R}}
\newcommand{\dZ}{\mathbb{Z}}
\newcommand{\dP}{\mathbb{P}}
\newcommand{\dE}{\mathbb{E}}
\newcommand{\dD}{\mathbb{D}}
\newcommand{\dL}{\mathbb{L}}
\newcommand{\Var}{\mathrm{Var}}
\newcommand{\ABS}[1]{{{\left| #1 \right|}}} % |1|
\newcommand{\DP}[1]{{{\left<#1\right>}}} % <1>
\newcommand{\NRM}[1]{{{\left\| #1\right\|}}} % ||1||
\newcommand{\PAR}[1]{{{\left(#1\right)}}} % (1)
\newcommand{\SBRA}[1]{{{\left[#1\right]}}} % [1]
\begin{document}

\maketitle

\begin{abstract}
  The Persistent Turning Walker Model (PTWM) was introduced by Gautrais et al
  in Mathematical Biology for the modelling of fish motion. % \cite{gautetal}.
  It involves a nonlinear pathwise functional of a non-elliptic hypo-elliptic
  diffusion. This diffusion solves a kinetic Fokker-Planck equation based on
  an Ornstein-Uhlenbeck Gaussian process. The long time ``diffusive'' behavior
  of this model was recently studied by Degond \& Motsch % in \cite{DegSeb}
  using partial differential equations techniques. This model is however
  intrinsically probabilistic. In the present paper, we show how the long time
  diffusive behavior of this model can be essentially recovered and extended
  by using appropriate tools from stochastic analysis. The approach can be
  adapted to many other kinetic ``probabilistic'' models. 
  % Beyond the mathematical results, the aim of this short paper is also to
  % contribute to the diffusion of stochastic techniques in the domain of
  % partial differential equations. Also, the text aims to be very accessible
  % for non probabilists.
\end{abstract}

\bigskip

{\footnotesize %
\noindent\textbf{Keywords.} Mathematical Biology; animal behavior; hypo-elliptic
diffusions; kinetic Fokker-Planck equations; Poisson equation; invariance
principles; central limit theorems, Gaussian and Markov processes.

\medskip

\noindent\textbf{AMS-MSC.} 82C31; 35H10; 60J60; 60F17; 92B99; 92D50; 34F05.

}

\section{Introduction}\label{se:intro}

Different types of models are used in Biology to describe individual
displacement. For instance, correlated/reinforced random walks are used for
the modelling of ant, see e.g. \cite{casellas2008icd,theraulaz2002spa}, and
cockroaches, see e.g. \cite{jeanson2003mam} and \cite{codling:rwm} for a
review. On the other hand, a lot of natural phenomena can be described by
kinetic equations and their stochastic counterpart, stochastic differential
equations. The long time behavior of such models is particularly relevant
since it captures some ``stationary'' evolution. Recently, a new model, called
the Persistent Turning Walker model (PTWM for short), involving a kinetic
equation, has been introduced to describe the motion of fish
\cite{gautetal,DegSeb}. The natural long time behavior of this model is
``diffusive'' and leads asymptotically to a Brownian Motion.

The diffusive behavior of the PTWM has been obtained in \cite{DegSeb} using
partial differential equations techniques. In the present work, we show how to
recover this result by using appropriate tools from stochastic processes
theory. First, we indicate how the diffusive behavior arises naturally as a
consequence of the Central Limit Theorem (in fact an Invariance Principle). As
expected, the asymptotic process is a Brownian Motion in space. As a
corollary, we recover the result of \cite{DegSeb} which appears as a special
case where the variance of the Brownian Motion can be explicitly computed. We
finally extend our main result to more general initial conditions. We
emphasize that the method used in the present paper is not restricted to the
original PTWM. In particular, the hypotheses for the convergence enables to
use more general kinetic models than the original PTWM.

The present paper is organized as follows: in Section \ref{se:prel}, we recall
the PTWM and its main properties, and we give the main results. Section
\ref{se:proofs} is dedicated to the proofs.

% Trend to such an ``equilibrium'' can be studied using various methods.
% Fluctuations ``at equilibrium'' are expected to model a ``diffusive''
% behavior. The present work focuses on a specific model called the
% ``Persistent Turning Walker Model'' (PTWM for short) used for the modelling
% of fish motion in Biology. We refer to \cite{gautetal,DegSeb} for a detailed
% discussion of the pertinence of the model. The

\section{Main results}\label{se:prel}

In the PTWM, the motion is described using three variables: \emph{position}
$x\in \dR^2$, \emph{velocity angle} $\theta \in \dR$, and \emph{curvature}
$\kappa \in \dR$. For some fixed real constant $\alpha$, the probability
distribution $p(t,x,\theta,\kappa)$ of finding particles at time $t$ in a
small neighborhood of $(x,\theta,\kappa)$ is given by a forward
Chapman-Kolmogorov equation
\begin{equation}\label{eq:Kforward}
  \partial_t p %
  + \tau.\nabla_x p %
  + \kappa \partial_\theta p %
  - \partial_\kappa(\kappa p) %
  - \alpha^2 %
  \, \partial^2_{\kappa^2} p \, %
  = \, 0
\end{equation}
with initial value $p_0$, where
$$
\tau(\theta)=(\cos \theta, \sin \theta)=e^{\sqrt{-1}\,\theta}.
$$
The stochastic transcription of \eqref{eq:Kforward} is given by the stochastic
differential system $(t\geq0)$
\begin{equation}\label{eq:sdefish}
\begin{cases}
  dx_t = \tau(\theta_t) \, dt \\
  d\theta_t  = \kappa_t \, dt \\
  d\kappa_t  = - \kappa_t \, dt \, + \, \sqrt{2} \alpha \, dB_t
\end{cases}
\end{equation}
where $(B_t)_{t\geq0}$ is a standard Brownian Motion on $\dR^2$. The
probability density function $p(t,x,\theta,\kappa)$ of
$(x_t,\theta_t,\kappa_t)$ with a given initial law $p_0 \, dx \, d\theta \,
d\kappa$ is then solution of \eqref{eq:Kforward}. Also, \eqref{eq:Kforward} is
in fact a kinetic Fokker-Planck equation. Note that $(\kappa_t)_{t\geq0}$ is
an Ornstein-Uhlenbeck Gaussian process. The formula
$$
\theta_t=\theta_0+\int_0^t\!\kappa_s\,ds
$$
expresses $(\theta_t)_{t\geq0}$ as a pathwise linear functional of
$(\kappa_t)_{t\geq0}$. In particular the process $(\theta_t)_{t\geq0}$ is
Gaussian and is thus fully characterized by its initial value, together with
its time covariance and mean which can be easily computed from the ones of
$(\kappa_t)_{t\geq0}$ conditional on $\theta_0$ and $\kappa_0$. The process
$(\theta_t)_{t\geq0}$ is not Markov. However, the pair
$(\theta_t,\kappa_t)_{t\geq0}$ is a Markov Gaussian diffusion process and can
be considered as the solution of a degenerate stochastic differential
equation, namely the last two equations of the system \eqref{eq:sdefish}.
Additionally, the process $(x_t)_{t\geq0}$ is an ``additive functional'' of
$(\theta_t,\kappa_t)_{t\geq0}$ since
\begin{equation}\label{eq:addfish}
  x_t %
   \ = \ x_0 + \int_0^t\!\tau(\theta_s) \, ds  %
   \ = \ x_0 +\int_0^t\!\tau\PAR{\theta_0+\int_0^s\!\kappa_u\,du}\,ds.
\end{equation}
%In the sequel, we take $x_0=0$. 
Note that $x_t$ is a nonlinear function of $(\theta_s)_{0\leq s\leq t}$ due to
the nonlinear nature of $\tau$, and thus $(x_t)_{t\geq0}$ is not Gaussian. The
invariant measures of the process $(\theta_t,\kappa_t)_{t\geq0}$ are multiples
of the tensor product of the Lebesgue measure on $\dR$ with the Gaussian law
of mean zero and variance $\alpha^2$. These measures cannot be normalized into
probability laws. Since $\tau$ is $2\pi$-periodic, the process
$(\theta_t)_{t\geq0}$ acts in the definition of $x_t$ only modulo $2\pi$, and
one may replace $\theta$ by $\ote \in S^1:=\dR/2\pi \dZ$. The Markov diffusion
process
$$
{(y_t)}_{t\geq0}={(\ote_t,\kappa_t)}_{t\geq0}
$$
has state space $S^1\times\dR$ and admits a unique invariant law $\mu$ which
is the tensor product of the uniform law on $S^1$ with the Gaussian law of
mean zero and variance $\alpha^2$, namely
$$
d\mu(\ote,\kappa) %
= \frac{1}{\sqrt{2\pi\alpha^2}}\mathds{1}_{S^1}(\ote)%
\exp\PAR{-\frac{\kappa^2}{2\alpha^2}}%
 d\ote d\kappa.
$$
Note that ${(y_t)}_{t\geq0}$ is ergodic but is not reversible (this is simply
due to the fact that the dynamics on observables depending only on $\ote$ is
not reversible). The famous Birkhoff-von Neumann Ergodic Theorem
\cite{MR797411,MR1725357,JS,Kut,MR997938} states that for every
$\mu$-integrable function $f:S^1\times\dR\to\dR$ and any initial law $\nu$
(i.e.\ the law of $y_0$), we have,
\begin{equation}\label{eq:erg}
  \mathbb{P}\PAR{
    \lim_{t\to\infty}%
    \PAR{\frac{1}{t}\int_0^t\!f(y_s)\,ds-\int_{S^1\times\dR}\!f\,d\mu}=0}=1.
\end{equation}
Beyond this Law of Large Numbers describing for instance the limit of the
functional \eqref{eq:addfish}, one can ask for the asymptotic fluctuations,
namely the long time behavior as $t\to\infty$ of
\begin{equation}\label{eq:CLT}
  \sigma_t\PAR{\frac{1}{t}\int_0^t\!f(y_s)\,ds-\int_{S^1\times\dR}\!f\,d\mu}
\end{equation}
where $\sigma_t$ is some renormalizing constant such that $\sigma_t\to\infty$
as $t\to\infty$. By analogy with the Central Limit Theorem (CLT for short) for
reversible diffusion processes (see e.g. \cite{MR2238823,Kut}), we may expect,
when $f$ is ``good enough'' and when $\sigma_t=\sqrt{t}$, a convergence in
distribution of \eqref{eq:CLT} as $t\to\infty$ to some Gaussian distribution
with variance depending on $f$ and on the infinitesimal dynamics of
${(y_t)}_{t\geq0}$. This is the aim of Theorem \ref{th:main} below, which goes
actually further by stating a so called \emph{Invariance Principle}, in other
words a CLT for the whole process and not only for a fixed single time.

\begin{theorem}[Invariance Principle at equilibrium]\label{th:main}
  Assume that $y_0=(\ote_0,\kappa_0)$ is distributed according to the
  equilibrium law $\mu$. Then for any $C^\infty$ bounded
  $f:S^1\times\dR\to\dR$ with zero $\mu$-mean, the law of the process
  $$
  {(z_t^\varepsilon)}_{t\geq0}:=
  \PAR{\varepsilon\int_0^{t/\varepsilon^2}\!\!\!\!f(y_s)\,ds,\, y_{t/\varepsilon^2}}_{t \geq0}
  $$
  converges as $\varepsilon \to 0$ to $\mathcal W^f \otimes
  \mu^{\otimes\infty}$ where $\mathcal W^f$ is the law of a Brownian Motion
  with variance
  $$
  V_f = -\int\!gLg\,d\mu %
  =2\alpha^2\,\int\!|\partial_\kappa g|^2\,d\mu
  $$
  where $L=\alpha^2 \partial_\kappa^2 - \kappa \partial_\kappa + \kappa
  \, \partial_\theta$ acts on $2\pi$-periodic functions in $\theta$, and
  $g:S^1\times\dR\to\dR$ is
  $$
  g(y)= -\dE\left(\int_0^{\infty}\!f(y_s)\,ds\,\biggr\vert\,y_0=y\right).
  $$
  In other words, for any fixed integer $k\geq1$, any fixed times $0\leq
  t_1<\cdots< t_k$, and any bounded continuous function $F:(\dR\times
  S^1\times\dR)^k\to\dR$, we have
  $$
  \lim_{\varepsilon\to0}
  \dE\SBRA{F(z_{t_1}^\varepsilon,\ldots,z_{t_k}^\varepsilon)}
  =\dE\SBRA{F((W^f_{t_1},Y_1),\ldots,(W^f_{t_k},Y_k))}
  $$
  where $Y_1,\ldots,Y_k$ are independent and equally distributed random
  variables of law $\mu$ and where ${(W_t^f)}_{t\geq0}$ is a Brownian Motion
  with law $\cW^f$, independent of $Y_1,\ldots,Y_k$.
\end{theorem}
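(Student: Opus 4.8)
The plan is to use the classical martingale approximation (Kipnis--Varadhan) method: reduce the additive functional to a martingale via the Poisson equation, prove a functional CLT for that martingale, and recover the endpoint coordinates through ergodic mixing. The starting point is to solve the Poisson equation $Lg=f$. Since $f$ has zero $\mu$-mean, the natural candidate is $g=-\int_0^\infty P_sf\,ds$, where $(P_s)$ is the semigroup of $(y_t)$; this is exactly the $g$ of the statement, and differentiating in $s$ gives $Lg=f$. The crucial issue is convergence of this integral, which I would get from exponential decay of $P_sf$ in $L^2(\mu)$. To obtain such decay for the non-reversible hypoelliptic generator $L$, I would decompose in Fourier modes in $\theta$ and Hermite modes in $\kappa$: writing $g=\sum_n e^{in\theta}g_n(\kappa)$ reduces $L$ to the family of shifted Ornstein--Uhlenbeck operators $L_n=\alpha^2\partial_\kappa^2-\kappa\partial_\kappa+in\kappa$ on the Gaussian $L^2$, for which one proves uniform resolvent and spectral-gap bounds (the case $n=0$ is the genuine OU gap, $n\neq0$ a controllable non-self-adjoint perturbation). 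This yields $g\in L^2(\mu)$ with $\partial_\kappa g\in L^2(\mu)$, and H\"ormander hypoellipticity upgrades $g$ to a smooth function. I expect this step — well-posedness and the needed integrability of the corrector, i.e.\ the hypocoercivity of $L$ — to be the main analytic obstacle.

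With $g$ in hand I would write the martingale decomposition. It\^o's formula applied to $g(y_t)$ along \eqref{eq:sdefish} gives $dg(y_t)=Lg(y_t)\,dt+\sqrt{2}\alpha\,\partial_\kappa g(y_t)\,dB_t$, hence, using $Lg=f$,
$$\int_0^t f(y_s)\,ds=g(y_t)-g(y_0)-M_t,\qquad M_t:=\sqrt{2}\alpha\int_0^t\partial_\kappa g(y_s)\,dB_s.$$
After the parabolic rescaling, the first coordinate of $z^\varepsilon_t$ becomes $\varepsilon\PAR{g(y_{t/\varepsilon^2})-g(y_0)}-\varepsilon M_{t/\varepsilon^2}$; by stationarity the boundary term is $O(\varepsilon)$ in $L^2$ and vanishes uniformly on compact time sets, so it suffices to study $N^\varepsilon_t:=-\varepsilon M_{t/\varepsilon^2}$.

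Then I would invoke the continuous-martingale invariance principle. The bracket is $\langle N^\varepsilon\rangle_t=\varepsilon^2\int_0^{t/\varepsilon^2}2\alpha^2\,|\partial_\kappa g(y_s)|^2\,ds$, which by the ergodic theorem \eqref{eq:erg} applied to $2\alpha^2|\partial_\kappa g|^2\in L^1(\mu)$ converges to $t\,V_f$ with $V_f=2\alpha^2\int|\partial_\kappa g|^2\,d\mu$; the two expressions for $V_f$ in the statement are then identified through integration by parts against $\mu$, using the antisymmetry of the transport part $\kappa\partial_\theta$ and the symmetry of the OU part. Since $N^\varepsilon$ is a continuous martingale whose bracket converges to the deterministic linear function $t\,V_f$, Rebolledo's theorem yields convergence in law to a Brownian Motion $W^f$ of variance $V_f$, which settles the first-coordinate statement.

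Finally I would treat the joint law with the endpoints and the asserted independence. Because $y_0\sim\mu$ and $\mu$ is invariant, each $y_{t_j/\varepsilon^2}$ is exactly $\mu$-distributed; for distinct times the gaps $(t_j-t_{j-1})/\varepsilon^2\to\infty$, so the same exponential mixing used above gives $\dE\SBRA{\prod_j\phi_j(y_{t_j/\varepsilon^2})}\to\prod_j\int\phi_j\,d\mu$, i.e.\ the $Y_j$ become independent copies of $\mu$. To obtain independence of $W^f$ from the $Y_j$, I would split the first coordinate into block increments $\Delta^\varepsilon_j=-\varepsilon(M_{t_j/\varepsilon^2}-M_{t_{j-1}/\varepsilon^2})$ (modulo negligible boundary terms), condition successively through the Markov property, and insert a buffer block of fixed length $R$ just before each sampling time: over the buffer the process relaxes, decoupling the bulk martingale increment that produces the Gaussian limit from the terminal state $y_{t_j/\varepsilon^2}$, and sending $\varepsilon\to0$ then $R\to\infty$ kills the error. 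This factorizes the joint characteristic/test functional into the Brownian part times $\prod_j\int\phi_j\,d\mu$, giving convergence to $\cW^f\otimes\mu^{\otimes\infty}$. Besides the Poisson-equation step, this decorrelation of the Brownian limit from the instantaneous states is the second point requiring genuine care.
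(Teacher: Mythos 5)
Your proposal is correct and reproduces the paper's probabilistic skeleton almost step for step: the corrector $g=-\int_0^\infty P_sf\,ds$ solving $Lg=f$, smoothness via H\"ormander hypoellipticity, the It\^o decomposition with the boundary term killed by stationarity, Rebolledo's martingale CLT with bracket convergence from the ergodic theorem \eqref{eq:erg}, and a Markov-property/buffer argument for the asymptotic independence from the endpoint coordinates (your fixed-$R$-then-$R\to\infty$ buffer is a two-parameter variant of the paper's single buffer of length $s_\varepsilon=t/\sqrt{\varepsilon}$, chosen so that $s_\varepsilon\to\infty$ while $\varepsilon s_\varepsilon\to0$; the paper also isolates the decorrelation of the $Y_j$'s as its ``propagation of chaos'' Lemma \ref{le:chaos}). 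The one genuinely different ingredient is how you establish the key decay estimate behind the Poisson equation. You propose a Fourier (in $\theta$) and Hermite (in $\kappa$) decomposition reducing $L$ to the non-self-adjoint operators $L_n=\alpha^2\partial_\kappa^2-\kappa\partial_\kappa+in\kappa$ with uniform-in-$n$ resolvent bounds; this hypocoercivity route can indeed be closed for this model because the semigroup is explicitly Gaussian (Mehler-type formulas give exponential decay of $e^{tL_n}$ uniformly in $n\neq0$), but calling $in\kappa$ a ``controllable non-self-adjoint perturbation'' is your thin spot, since it is an \emph{unbounded} perturbation and naive perturbation theory fails --- the uniformity is exactly where genuine hypocoercive work, or the explicit Gaussian computation, must be done. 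The paper proceeds more softly and more robustly: the Lyapunov function $H=1+\kappa^2$ satisfies $L^*H\leq -H+2(\alpha^2+1)\mathds{1}_C$ with $C$ compact and petite, and the Meyn--Tweedie/Bakry--Cattiaux--Guillin theorem yields $\NRM{P_tf}_{\dL^2(\mu)}\leq K_2\NRM{f}_\infty e^{-t}$ for all bounded centered $f$ at once; this is less quantitative mode-by-mode, but it is what allows Remark \ref{rm:gene} to replace the Ornstein--Uhlenbeck curvature dynamics by a general Kolmogorov diffusion, possibly with subexponential rates, where your Fourier--Hermite structure is unavailable. Two small points: the paper obtains $\partial_\kappa g\in\dL^2(\mu)$ for free from $\dE_\mu[|\partial_\kappa g|^2]\propto-\dE_\mu[gf]<\infty$, whereas you extract it from the mode analysis; and your asserted ``integration by parts'' identification deserves a second look, since the carr\'e du champ of $L$ is $\alpha^2|\partial_\kappa g|^2$, so the bracket limit is $2\alpha^2\int|\partial_\kappa g|^2\,d\mu=-2\int gLg\,d\mu$ --- the first equality displayed in the statement of the theorem carries a factor-$2$ slip that a bare integration by parts will not reproduce (your operative variance, coming from the bracket, is nonetheless the correct one).
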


Theorem \ref{th:main} encloses some decorrelation information as $\varepsilon$
goes to $0$ since the limiting law is a tensor product (just take for $F$ a
tensor product function). Such a convergence in law at the level of the
processes expresses a so called Invariance Principle. Here the Invariant
Principle is \emph{at equilibrium} since $y_0$ follows the law $\mu$.
% Note that $\tau$ is replaced by a more general $2\pi$-periodic function $f$.
The proof of Theorem \ref{th:main} is probabilistic, and relies on the fact
that $g$ solves the Poisson\footnote{It is amusing to remark that ``poisson''
  means ``fish'' in French\ldots.} equation $Lg=f$. Note that neither the
reversible nor the sectorial assumptions of \cite{MR2238823} are satisfied
here.

Theorem \ref{th:main} remains valid when $f$ is complex valued (this needs the
computation of the asymptotic covariance of the real and the imaginary part of
$f$). The hypothesis on $f$ enables to go beyond the original framework of
\cite{DegSeb}. For instance, we could add the following rule in the model:
{\it the speed of the fish decreases as the curvature increases}.
Mathematically, this is roughly translated as:
\begin{equation}
  \label{eq:f_curvature}
  f(y)=f(\theta,\kappa) = c(|\kappa|) (\cos \theta,\,\sin \theta)
\end{equation}
where $s\mapsto c(s)$ is a regular enough decreasing function, see Figure
\ref{fig:example_PTWM} for a simulation.

\begin{figure}[ht!]
  \centering
  \includegraphics[scale=1]{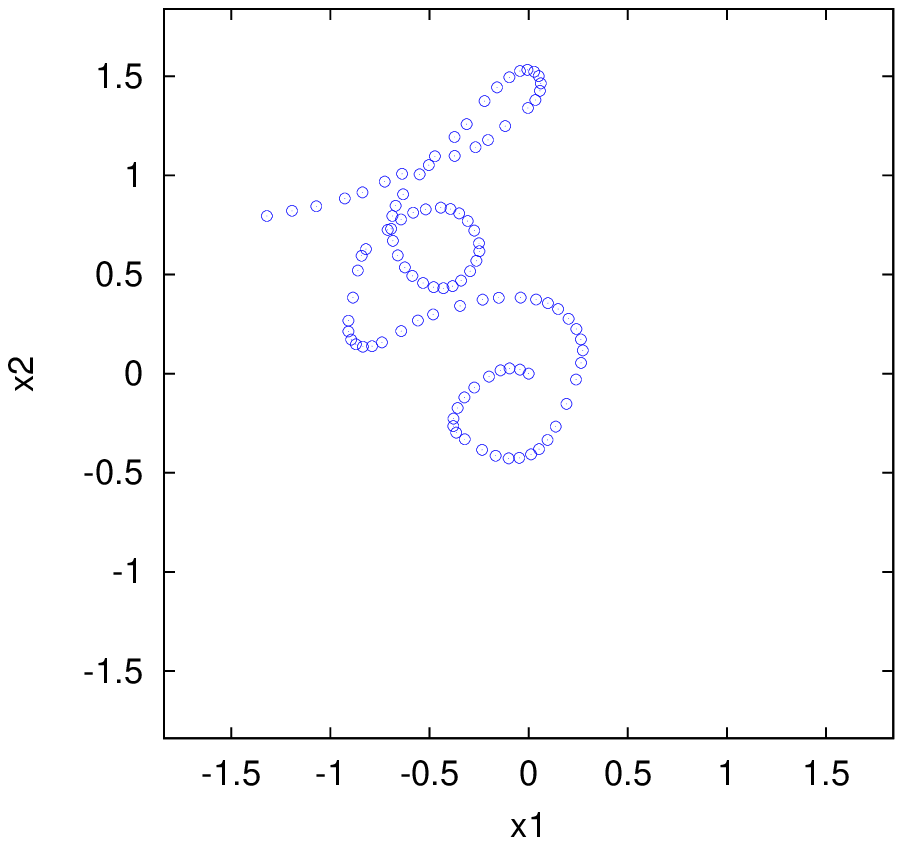}
  \caption{An example of the trajectory $t\mapsto x_t=(x_t^1,x_t^2)$ of the
    PTWM where the speed of the fish decreases with higher curvature (eq.
    \ref{eq:f_curvature}). Here $\alpha=1$ and $c(|\kappa|)=1/(1+2|\kappa|)$.
    The simulation is run during 10 time units, we plot a point each $.1$ time
    unit.}
  \label{fig:example_PTWM}
\end{figure}

The following corollary is obtained from Theorem \ref{th:main} by taking
roughly $f=\tau$ and by computing $V_\tau$ explicitly. In contrast with the
function $f$ in Theorem \ref{th:main}, the function $\tau$ is complex valued.
Also, an additional argument is in fact used in the proof of Corollary
\ref{co:dm} to compute the asymptotic covariance of the real and imaginary
parts of the additive functional based on $\tau$ (note that this seems to be
missing in \cite{DegSeb}).

\begin{corollary}[Invariance Principle for PTWM at equilibrium]\label{co:dm}
  Assume that the initial value $y_0=(\ote_0,\kappa_0)$ is distributed
  according to the equilibrium $\mu$. Then the law of the process
  \begin{equation}\label{eq:proc}
  \PAR{\varepsilon\int_0^{t/\varepsilon^2}\!\!\!\!\tau(\theta_s)\,ds,\,y_{t/\varepsilon^2}}_{t\geq0}
  \end{equation}
  converges as $\varepsilon \to 0$ to $\cW^\tau \otimes \mu^{\otimes\infty}$
  where $\cW^\tau$ is the law of a $2$-dimensional Brownian Motion with
  covariance matrix $\dD I_2$ where
  $$
  \dD = \int_0^{\infty}\!e^{- \, \alpha^2 (s-1+e^{-s})}\,ds.
  $$
\end{corollary}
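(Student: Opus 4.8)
The plan is to deduce Corollary \ref{co:dm} from Theorem \ref{th:main} applied to the two real components of $\tau$, the only genuine extra work being the computation of the full $2\times2$ asymptotic covariance matrix. First I would observe that each component of $\tau(\theta)=(\cos\theta,\sin\theta)$ is $C^\infty$, bounded, depends on $y=(\ote,\kappa)$ through $\ote$ alone, and has zero $\mu$-mean since $\int_{S^1}\cos\ote\,d\ote=\int_{S^1}\sin\ote\,d\ote=0$. Since Theorem \ref{th:main} is stated for scalar $f$, I would apply it to $f_{a,b}:=a\cos\ote+b\sin\ote$ for arbitrary $(a,b)\in\dR^2$ and invoke the Cram\'er--Wold device: each $f_{a,b}$ satisfies the hypotheses, so the associated scalar functional converges, in the sense of finite-dimensional distributions, to a Brownian Motion of variance $V_{f_{a,b}}$, together with the independent $\mu^{\otimes\infty}$ structure for the sampled $y$-marginals. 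Combining these linear combinations upgrades to joint convergence of the process \eqref{eq:proc} to $\cW^\tau\otimes\mu^{\otimes\infty}$ for a $2$-dimensional Brownian Motion with law $\cW^\tau$ and covariance matrix $\Sigma$ characterized by $(a,b)\,\Sigma\,(a,b)^\top=V_{f_{a,b}}$. It then remains to show $\Sigma=\dD\,I_2$.

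The heart of the computation is Gaussian. At equilibrium $\ote_0$ is uniform on $S^1$ and independent of the stationary Ornstein--Uhlenbeck process $(\kappa_u)_{u\ge0}$, so the increment $\Delta_s:=\theta_s-\theta_0=\int_0^s\kappa_u\,du$ is a centered Gaussian variable independent of $\ote_0$, with variance $\int_0^s\!\int_0^s\alpha^2 e^{-|u-v|}\,du\,dv=2\alpha^2(s-1+e^{-s})$. Hence the complex autocorrelation is $\dE_\mu[\tau(\theta_s)\overline{\tau(\theta_0)}]=\dE[e^{\sqrt{-1}\,\Delta_s}]=e^{-\alpha^2(s-1+e^{-s})}=:C(s)$, which is real, whereas the pseudo-correlation factorizes as $\dE_\mu[\tau(\theta_s)\tau(\theta_0)]=\dE[e^{2\sqrt{-1}\,\ote_0}]\,\dE[e^{\sqrt{-1}\,\Delta_s}]=0$. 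This vanishing of the pseudo-correlation --- exactly the ``additional argument'' alluded to after the statement --- is what decouples the real and imaginary parts.

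Finally I would assemble the covariance matrix through the Green--Kubo form of the variance formula of Theorem \ref{th:main}, namely $\Sigma=\int_0^\infty\big(R(s)+R(s)^\top\big)\,ds$ where $R(s)$ has entries $\mathrm{Cov}_\mu(f_i(y_0),f_j(y_s))$ for $f_1=\cos\ote$ and $f_2=\sin\ote$. Taking real and imaginary parts in the two identities above gives $\mathrm{Cov}_\mu(\cos\theta_0,\cos\theta_s)=\mathrm{Cov}_\mu(\sin\theta_0,\sin\theta_s)=\tfrac12 C(s)$ and $\mathrm{Cov}_\mu(\cos\theta_0,\sin\theta_s)=\mathrm{Cov}_\mu(\sin\theta_0,\cos\theta_s)=0$, so that $R(s)=\tfrac12 C(s)\,I_2$. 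Being symmetric, $\Sigma=\int_0^\infty C(s)\,I_2\,ds=\dD\,I_2$, the integral converging since $C(s)\sim e^{\alpha^2}e^{-\alpha^2 s}$ as $s\to\infty$. I expect the only genuine obstacle to be the off-diagonal term $\Sigma_{12}$: the scalar theorem says nothing about it, and its vanishing rests on the symmetry argument above rather than on a direct Poisson-equation computation, which for $f=e^{\sqrt{-1}\theta}$ would lead to a less transparent ODE in $\kappa$.
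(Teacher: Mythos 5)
Your proposal is correct and arrives at the right constant, but it organizes the argument differently enough from the paper to be worth comparing. The paper does not use Cram\'er--Wold at all: it simply asserts (in the discussion after Theorem \ref{th:main}) that the theorem remains valid for complex-valued $f$, applies it directly to the $2$-dimensional function $\tau$ together with the asymptotic-covariance remark, and then devotes the entire proof to the covariance matrix --- showing via the Markov property and stationarity that $\dE_\mu[x_t^1x_t^2]$ reduces to a double integral of $\dE_\mu[\sin(\theta_u+\theta_s)]$, which vanishes \emph{exactly at every finite $t$} because $\dE_\mu[\sin(2\ote_0+C)]=0$ for $\ote_0$ uniform, and then dismissing the diagonal entries as ``similar computations''. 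Your version makes explicit precisely what the paper leaves implicit: the stationary correlation function $C(s)=e^{-\alpha^2(s-1+e^{-s})}$ (your $\mathrm{Var}(\Delta_s)=2\alpha^2(s-1+e^{-s})$ agrees with what \eqref{eq:explicit}--\eqref{eq:explicitbis} give), the vanishing pseudo-correlation $\dE_\mu[\tau(\theta_s)\tau(\theta_0)]=\dE[e^{2\sqrt{-1}\,\ote_0}]\,\dE[e^{\sqrt{-1}\,\Delta_s}]=0$ --- the same cancellation as the paper's, and correctly identified as the ``additional argument'' alluded to in the text --- and the Green--Kubo assembly $\Sigma_{11}=2\int_0^\infty\tfrac12C(s)\,ds=\dD$, which matches the paper's $\dD$. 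One caveat on your reduction step: Cram\'er--Wold applied to the scalar processes built from a fixed $f_{a,b}$ only controls linear functionals $\sum_j c_j\langle(a,b),z^\varepsilon_{t_j}\rangle$ using the \emph{same} direction at every time, whereas a general linear functional on $(\dR^2)^k$ mixes directions across times; rewritten over increments it becomes $\sum_l\varepsilon\int_{t_{l-1}/\varepsilon^2}^{t_l/\varepsilon^2}f_{A_l,B_l}(y_s)\,ds$ with varying $(A_l,B_l)$, which the scalar statement of Theorem \ref{th:main} does not cover. This is repaired either by rerunning the proof with a vector-valued martingale (the bracket matrix converging to $t\Sigma$ by the Ergodic Theorem), which is in effect what the paper means by its complex-valued extension, or by invoking the asymptotic independence of increments established in the ``coupling with propagation of chaos'' step of the paper's proof; it is a gap of bookkeeping rather than substance, and the paper is no more detailed on this point than you are.
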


\begin{figure}[ht!]
  \centering
  \includegraphics[scale=0.5]{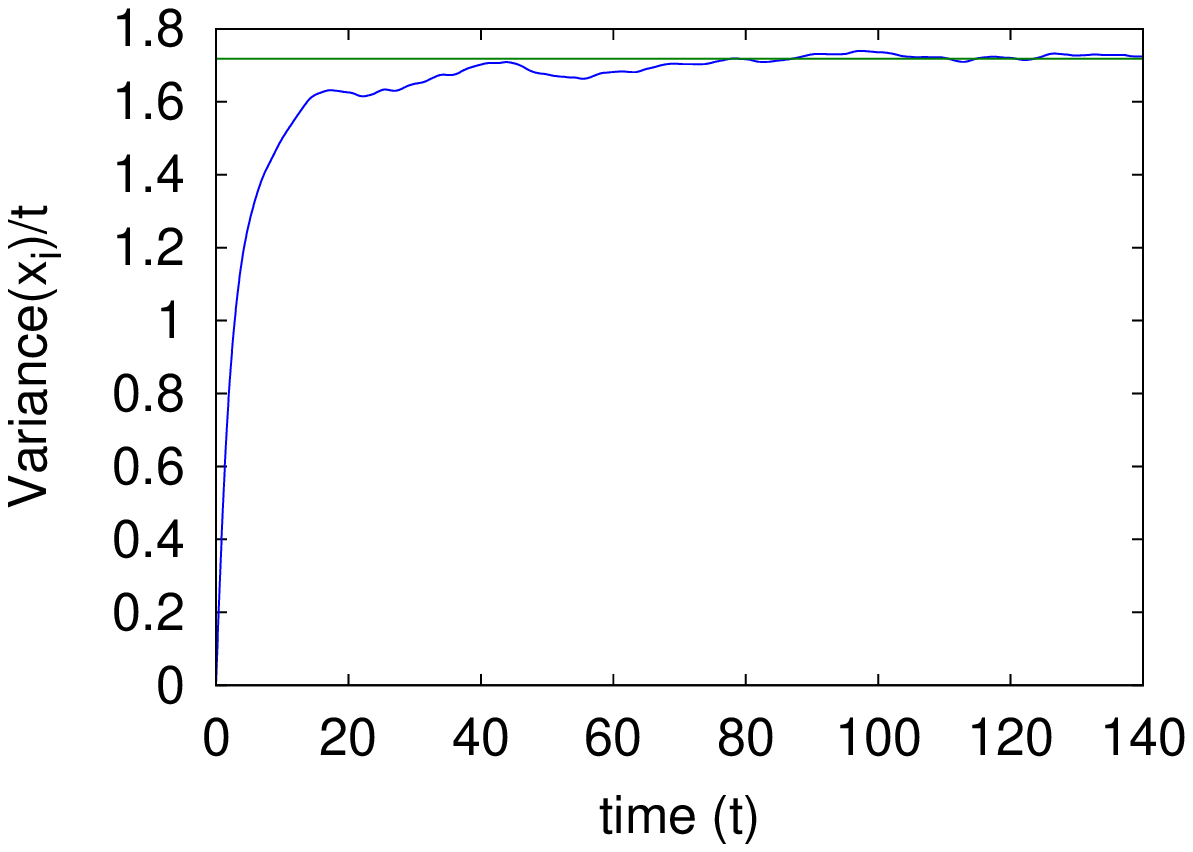}
  \caption{Convergence of $t^{-1}\mathrm{Var}(x_t)$ to the constant $\dD$.
    Here $\alpha=1$.}
  \label{fig:var_div_t}
\end{figure}

It can be shown that the constant $\dD$ which appears in Corollary 
\ref{co:dm} satisfies to
$$
\dD=\lim_{t\to\infty} \frac{1}{t}\Var(x_t^1) %
=\lim_{t\to\infty}\frac{1}{t}\Var(x_t^2) %
\quad\text{where}\quad %
(x_t^1,x_t^2)=x_t=\int_0^{t}\!\!\!\tau(\theta_s)\,ds
$$
see e.g. Figure \ref{fig:var_div_t}. Corollary \ref{co:dm} complements a
result of Degond \& Motsch \cite[Theorem 2.2]{DegSeb} which states -- in their
notations -- that the probability density function
$$
p^\varepsilon(t,x,\ote,\kappa) = \frac{1}{\varepsilon^2}
\, p \,
\left(\frac{t}{\varepsilon^2},\frac{x}{\varepsilon},\ote,\kappa\right) 
$$
converges as $\varepsilon \to 0$ to the probability density
$$
\frac{1}{\sqrt{2\pi}}\,n^0(t,x) \, M(\kappa)
$$ 
where $M$ is the Gaussian law with zero mean and variance $\alpha^2$, and
$n^0$ solves the equation 
$$
\partial_t n^0  - \frac{1}{2}\dD \Delta_x n^0  = 0 
$$ 
where $\dD$ is as in Corollary \ref{co:dm}. Convergence holds in a weak sense
in some well chosen Banach space, depending on the initial distribution. The
meaning of $p^\varepsilon$ is clear from the stochastic point of view: it is
the probability density function of the distribution of the rescaled process
(recall that $x$ is two-dimensional)
$$
\PAR{\varepsilon x_{t/\varepsilon^2},\,y_{t/\varepsilon^2}}_{t\geq0} %
= \PAR{\varepsilon x_{t/\varepsilon^2}, %
  \ \ote_{t/\varepsilon^2}, %
  \ \kappa_{t/\varepsilon^2}}_{t\geq0}.
$$
In other words, the main result of \cite{DegSeb} captures the asymptotic
behavior at fixed time of the process \eqref{eq:proc} by stating that for any
$t$, and as $\varepsilon \to 0$, the law of this process at time $t$ tends to
the law of $(\sqrt{\dD} W_t, \ote,M)$ where $(W_t)_{t\geq0}$, and $(\ote,M)$
are independent, $(W_t)_{t\geq0}$ being a standard Brownian Motion, and
$(\ote,M)$ a random variable following the law $\mu$. This result encompasses
what is expected by biologists i.e.\ a ``diffusive limiting behavior''.

Starting from the equilibrium, Corollary \ref{co:dm} is on one hand stronger
and on the other hand weaker than the result of \cite{DegSeb} mentioned above.
Stronger because it is relative to the full law of the process, not only to
each marginal law at fixed time $t$. In particular it encompasses covariance
type estimates at two different times. Weaker because it is concerned with the
law and not with the density. For the density at time $t$ we recover a weak
convergence, while the one obtained in \cite{DegSeb} using partial
differential equations techniques is of strong nature. We should of course go
further using what is called ``local CLTs'', dealing with densities instead of
laws, but this will require some estimates which are basically the key of the
analytic approach used in \cite{DegSeb}.

Our last result concerns the behavior when the initial law is not the
invariant law $\mu$.

\begin{theorem}[Invariance Principle out of equilibrium]\label{th:out}
  The conclusion of Corollary \ref{co:dm} still holds true when
  $y_0=(\ote_0,\kappa_0)$ is distributed according to some law $\nu$ such
  that $d\nu_{s_0}/d\mu$ belongs to $\dL^q(\mu)$ for some $s_0\geq 0$ and
  $q>1$, where $\nu_{s_0}$ is the law of $y_{s_0}$. This condition is
  fulfilled for instance if $d\nu/d\mu$ belongs to $\dL^q(\mu)$ or if $\nu$
  is compactly supported.
\end{theorem}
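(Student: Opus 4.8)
The plan is to transfer the equilibrium statement of Corollary \ref{co:dm} to the initial law $\nu$ by an absolute continuity argument, exploiting that the initial condition interacts with the rescaled functional only through the fixed time $s_0$, which is negligible on the diffusive time scale $1/\varepsilon^2$. Throughout, $\dP_\nu$ (resp. $\dE_\nu$) denotes probability (resp. expectation) when $y_0\sim\nu$, and it suffices to fix $k$, times $0\le t_1<\cdots<t_k$, a bounded continuous test function $F$, and to prove that $\dE_\nu\SBRA{F(z^\varepsilon_{t_1},\ldots,z^\varepsilon_{t_k})}\to\ell$, where $z^\varepsilon$ is the process in \eqref{eq:proc} and $\ell=\dE\SBRA{F((W^\tau_{t_1},Y_1),\ldots,(W^\tau_{t_k},Y_k))}$ is the target expectation against $\cW^\tau\otimes\mu^{\otimes\infty}$.

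First I would discard the initial time segment. With $T=t/\varepsilon^2$ and $|\tau|=1$, one has $\varepsilon\int_0^{s_0}\tau(\theta_s)\,ds=O(\varepsilon s_0)$, so up to a vanishing error
$$
\PAR{\varepsilon\int_0^{T}\!\tau(\theta_s)\,ds,\,y_{T}}
=\PAR{\varepsilon\int_{s_0}^{T}\!\tau(\theta_s)\,ds,\,y_{T}}+o(1).
$$
By the Markov property at $s_0$ and time-homogeneity, the right-hand side is, conditionally on $y_{s_0}$, the same functional computed for the process restarted at $y_{s_0}$ at the shifted times $t_j-\varepsilon^2 s_0\to t_j$; this shift is harmless since the candidate limit is continuous in the time parameters, its $y$-marginal being $\mu$ at every time. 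Writing $h=d\nu_{s_0}/d\mu\in\dL^q(\mu)$ and $\Theta^\varepsilon(y)=\dE_y\SBRA{F(z^\varepsilon_{t_1},\ldots,z^\varepsilon_{t_k})}$, this reduction yields $\dE_\nu\SBRA{F(z^\varepsilon_{t_1},\ldots,z^\varepsilon_{t_k})}=\int\Theta^\varepsilon h\,d\mu+o(1)$, whereas Corollary \ref{co:dm} gives $\int\Theta^\varepsilon\,d\mu=\dE_\mu\SBRA{F(z^\varepsilon_{t_1},\ldots,z^\varepsilon_{t_k})}\to\ell$.

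It then remains to prove $\int\Theta^\varepsilon h\,d\mu\to\ell$. Since $\int h\,d\mu=1$, this is $\int(\Theta^\varepsilon-\ell)h\,d\mu\to0$, and by H\"older with $h\in\dL^q(\mu)$ it suffices that $\NRM{\Theta^\varepsilon-\ell}_{\dL^p(\mu)}\to0$ for the conjugate exponent $p=q/(q-1)$. As $F$ is bounded, $\Theta^\varepsilon$ is uniformly bounded, so by interpolation $\dL^p$-convergence follows from $\dL^2$-convergence, i.e. from $\int(\Theta^\varepsilon)^2\,d\mu\to\ell^2$. Now $\int(\Theta^\varepsilon)^2\,d\mu=\dE\SBRA{G^\varepsilon_{(1)}\,G^\varepsilon_{(2)}}$, where $G^\varepsilon_{(1)},G^\varepsilon_{(2)}$ are two copies of $F(z^\varepsilon_{t_1},\ldots,z^\varepsilon_{t_k})$ driven by independent Brownian motions but sharing the common initial value $y_0\sim\mu$, so the required limit is precisely an asymptotic decorrelation of these two copies.

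The hard part is exactly this decorrelation, and it is here that the full force of the hypotheses enters: the diffusion is neither reversible nor sectorial (as noted after Theorem \ref{th:main}), so no spectral-gap argument is directly available, and the genuine issue is the convergence to equilibrium of this non-elliptic hypo-elliptic dynamics. I would handle it by applying the method of Corollary \ref{co:dm} to the doubled process $(y^{(1)}_s,y^{(2)}_s)_{s\ge0}$ on $(S^1\times\dR)^2$, with generator $L\oplus L$ and invariant law $\mu\otimes\mu$: because the two driving noises are independent, this product diffusion again satisfies H\"ormander's condition and is ergodic, so the averages \eqref{eq:erg} hold on the product space and yield, at equilibrium $\mu\otimes\mu$, an invariance principle with two \emph{independent} limits. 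The diagonal initial law $y^{(1)}_0=y^{(2)}_0=y_0$ is singular with respect to $\mu\otimes\mu$, but the hypo-elliptic smoothing renders the law of $(y^{(1)}_{s_1},y^{(2)}_{s_1})$ absolutely continuous with an $\dL^q$ density for any $s_1>0$, so the very transfer argument above (now on the product system) forces $\int(\Theta^\varepsilon)^2\,d\mu\to\ell^2$. The same smoothing justifies the two sufficient conditions of the theorem, since a compactly supported $\nu$ acquires a bounded — hence $\dL^q$ for every $q$ — density after an arbitrarily short time $s_0$.
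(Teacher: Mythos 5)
Your reductions up to the decorrelation step are sound: discarding $[0,s_0]$, writing $\dE_\nu\SBRA{F(z^\varepsilon_{t_1},\ldots,z^\varepsilon_{t_k})}=\int\Theta^\varepsilon h\,d\mu+o(1)$ with $h=d\nu_{s_0}/d\mu\in\dL^q(\mu)$, the H\"older/interpolation argument, and the two-copy representation of $\int(\Theta^\varepsilon)^2\,d\mu$ are all correct. The genuine gap is that your final step is circular. Your transfer argument converts ``convergence from an initial law with $\dL^q$ density'' into ``$\dL^2(\mu)$-convergence of $\Theta^\varepsilon$ to $\ell$'', i.e.\ into the two-copy decorrelation $\int(\Theta^\varepsilon)^2\,d\mu\to\ell^2$. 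When you then invoke ``the very transfer argument above'' on the doubled system to handle the (singular) diagonal initial law, that argument in turn reduces to $\dL^2(\mu\otimes\mu)$-convergence of $\Theta^\varepsilon_{(2)}(y_1,y_2)=\Theta^\varepsilon(y_1)\Theta^\varepsilon(y_2)$, and the relevant second moment factorizes:
$$
\int\PAR{\Theta^\varepsilon_{(2)}}^2\,d(\mu\otimes\mu)
=\PAR{\int(\Theta^\varepsilon)^2\,d\mu}^2 ,
$$
so the product-level transfer needs exactly the statement $\int(\Theta^\varepsilon)^2\,d\mu\to\ell^2$ that it is supposed to deliver. Iterating only doubles the number of copies indefinitely. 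The root cause is that no quantitative convergence-to-equilibrium input ever enters your proof: nothing plays the role of the exponential decay \eqref{eq:decay}, and the ``hard part'' you correctly identify (mixing of a non-reversible, non-sectorial hypo-elliptic dynamics) is never actually established.

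A secondary unjustified claim: you deduce from ``hypo-elliptic smoothing'' that the time-$s_1$ law of the doubled diagonal system has a density in $\dL^q(\mu\otimes\mu)$. Hypo-ellipticity yields a \emph{smooth} density with respect to Lebesgue measure, but integrability of the density with respect to $\mu\otimes\mu$ is a tail statement about the transition kernel against the Gaussian factor of $\mu$; the paper has to prove even the simpler single-copy analogue (Dirac and compactly supported initial laws) by explicit Gaussian computations — the mean $m_t$, the covariance $\alpha^2 A_t$, and a comparison of quadratic forms at infinity — not by abstract smoothing, and your doubled kernel would require such a comparison uniformly in the common starting point integrated over $\mu$. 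For contrast, the paper closes the loop by rerunning the martingale proof of Theorem \ref{th:main} under $\dP_\nu$: the Ergodic Theorem \eqref{eq:erg} holds for \emph{any} initial law, so the martingale brackets still converge; the finite-dimensional martingale CLT holds under $\dP_\nu$ by \cite[Th. 3.6 p. 470]{JS}; and the only missing control — that $\varepsilon\,g(y_{t/\varepsilon^2})\to0$ in probability — is supplied by extending \eqref{eq:decay} to the $\dL^p$ bound \eqref{eq:decay_p}, which gives $g\in\dL^p(\mu)$ for all $p<\infty$ and hence $\dE_{\mu}\SBRA{\frac{d\nu_{s_0}}{d\mu}(y_0)\,g^2(y_s)}<\infty$ by H\"older against $d\nu_{s_0}/d\mu\in\dL^q(\mu)$. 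If you wish to keep your doubling scheme, you must prove the product-system invariance principle from the diagonal law by such a direct route (the martingale method applies verbatim to the product generator), not by the transfer argument again.
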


\section{Proofs}\label{se:proofs}

The story of CLTs and Invariant Principles for Markov processes is quite
intricate and it is out of reach to give a short account of the literature.
The reader may find however a survey on some aspects in e.g.
\cite{MR2238823,MR2068475,MR1637655,Kut}. Instead we shall exhibit some
peculiarities of our model that make the long time study an (almost) original
problem.
% First of all, as mentioned in Section \ref{se:prel},
The underlying diffusion process $(\theta_t,\kappa_t)_{t\geq0}$ with state
space $\dR^2$ is
% not ergodic. %:
% its invariant measures are multiples of the Lebesgue measure times a
% Gaussian law. This process is also
degenerate in the sense that its infinitesimal generator
\begin{equation}\label{eq:L}
L=\alpha^2 \, \partial^2_{\kappa^2} \, %
- \, \kappa \, \partial_\kappa \, %
+ \, \kappa \, \partial_\theta
\end{equation}
is not elliptic. Fortunately, the operator $\partial_t + L$ is H\"{o}rmander
hypo-elliptic since the ``diffusion'' vector field $X=(0,\alpha^2)$ and the
Lie bracket $[X,Y]=XY-YX$ of $X$ with the ``drift'' vector field
$Y=(\kappa,-\kappa)$ generate the full tangent space at each
$(\theta,\kappa)\in \dR^2$. The drift vector field $Y$ is always required, so
that the generator is ``fully degenerate''. This degeneracy of $L$ has two
annoying consequences:
\begin{enumerate}
\item any invariant measure $\nu$ of $L$ is not symmetric, i.e.\ $\int\!f
  Lg\,d\nu \neq \int\!g Lf\,d\nu$ for some nice functions $f$ and $g$ in
  $\dL^2(\nu)$, for instance only depending on $\theta$.
\item the \emph{carr\'e du champ} of $L$ given here by $\Gamma f =
  \frac{1}{2}L(f^2)-fLf= 2\alpha^2|\partial_\kappa f|^2$ is degenerate, so
  that one cannot expect to use any usual functional inequality such as the
  Poincar\'e inequality (i.e.\ spectral gap, see \cite{MR1845806,royer}) in
  order to study the long time behavior.
\end{enumerate}
This situation is typical for kinetic models. In the more general framework of
homogenization, a slightly more general version of this model has been studied
in \cite{HP}, see also the trilogy \cite{PV1,PV2,PV3} for similar results from
which one can derive the result in \cite{DegSeb}. The main ingredient of the
proof of Theorem \ref{th:main} is the control of the ``rate of convergence''
to equilibrium in the Ergodic Theorem \eqref{eq:erg}, for the process
$(\ote_t,\kappa_t)_{t\geq0}$. % instead of $(\theta_t,\kappa_t)_{t\geq0}$.
We begin with a simple lemma which expresses the propagation of chaos as
$\varepsilon$ goes to $0$.

% , so that we shall start by recalling recent results on this topic before to
% go on with the (usual) strategy to get the Invariance Principle. In all what
% follows convergence in law for a family of processes is understood as
% convergence in law for any finite-dimensional subsequence. It turns out that
% with some extra work it is possible to improve most of the results below by
% using the stronger Skorohod topology on paths \cite{JS}.

\begin{lemma}[Propagation of chaos]\label{le:chaos}
  Assume that $y_0=(\ote_0,\kappa_0)$ is distributed according to the
  equilibrium law $\mu$. Then the law of the process
  $(y^\varepsilon)_{t\geq0}=\PAR{y_{t/\varepsilon^2}}_{t \geq0}$ converges as
  $\varepsilon \to 0$ to $\mu^{\otimes\infty}$. In other words, for any fixed
  integer $k\geq1$, any fixed times $0\leq t_1<\cdots< t_k$, and any bounded
  continuous function $F:(S^1\times\dR)^k\to\dR$, we have
  $$
  \lim_{\varepsilon\to\infty}
  \dE\SBRA{F(y_{t_1}^\varepsilon,\ldots,y_{t_k}^\varepsilon)}
  =\dE\SBRA{F(Y_1,\ldots,Y_k)}
  $$
  where $Y_1,\ldots,Y_k$ are independent and equally distributed random
  variables of law $\mu$.
\end{lemma}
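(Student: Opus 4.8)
The plan is to prove the finite-dimensional convergence of the rescaled process $y^\varepsilon_t = y_{t/\varepsilon^2}$ to a tensor product $\mu^{\otimes\infty}$ by exploiting the fact that $y_0\sim\mu$ is the invariant law, so that each marginal $y^\varepsilon_{t_i}$ is already exactly distributed according to $\mu$ for every $\varepsilon$ and every $t_i$. The only content of the statement is therefore the \emph{asymptotic independence} of the coordinates $y_{t_1/\varepsilon^2},\ldots,y_{t_k/\varepsilon^2}$ as $\varepsilon\to0$, since the time gaps $(t_{i+1}-t_i)/\varepsilon^2$ all tend to $+\infty$. This is a mixing (decorrelation) statement, and the natural route is induction on $k$ combined with a quantitative bound on the convergence to equilibrium of the semigroup $P_t = e^{tL}$ acting on the process $(\ote_t,\kappa_t)_{t\ge0}$.

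First I would reduce to product test functions: by density and a standard Stone--Weierstrass / monotone-class argument it suffices to treat $F(y_1,\ldots,y_k)=\prod_{i=1}^k f_i(y_i)$ with each $f_i$ continuous and bounded. For such an $F$, using the Markov property at the successive times $t_i/\varepsilon^2$ and the stationarity under $\mu$, one writes
\begin{equation*}
\dE\SBRA{\prod_{i=1}^k f_i(y^\varepsilon_{t_i})}
=\int\! f_1\,P_{(t_2-t_1)/\varepsilon^2}\PAR{f_2\,P_{(t_3-t_2)/\varepsilon^2}\PAR{\cdots f_k}}d\mu,
\end{equation*}
peeling off one conditional expectation at a time from the outside. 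The target value is $\prod_{i=1}^k\int f_i\,d\mu$. The difference between the two is then controlled by repeatedly replacing each inner factor $P_{(t_{i+1}-t_i)/\varepsilon^2}g$ by its $\mu$-average $\int g\,d\mu$, the error of each such replacement being governed by the deviation $\NRM{P_s g-\int g\,d\mu}$ for $s=(t_{i+1}-t_i)/\varepsilon^2\to\infty$.

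The key analytic input is therefore a \textbf{convergence-to-equilibrium estimate}: for $g$ in a suitable class (say $\dL^2(\mu)$ or bounded smooth), $P_s g\to\int g\,d\mu$ in $\dL^2(\mu)$ as $s\to\infty$, ideally with an explicit rate. Because $L$ is only hypo-elliptic and non-reversible, the usual spectral-gap / Poincar\'e argument is unavailable, as the paper itself stresses; this is the step I expect to be the main obstacle. I would obtain the required decay either from the hypo-coercivity machinery adapted to kinetic Fokker--Planck operators, or, more in the spirit of this paper, from the solvability of the Poisson equation $Lg=f$ and the resulting control on $g(y)=-\dE(\int_0^\infty f(y_s)\,ds\mid y_0=y)$, which already encodes an integrable decay of correlations. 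Once such a bound is in hand, a telescoping estimate over the $k-1$ gaps, each of which $\to\infty$, shows the total error tends to $0$, giving
\begin{equation*}
\lim_{\varepsilon\to0}\dE\SBRA{\prod_{i=1}^k f_i(y^\varepsilon_{t_i})}
=\prod_{i=1}^k\int\! f_i\,d\mu
=\dE\SBRA{\prod_{i=1}^k f_i(Y_i)},
\end{equation*}
which is exactly the asserted convergence of finite-dimensional distributions to $\mu^{\otimes\infty}$.

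The bookkeeping is routine once the decorrelation rate is established; the genuine difficulty is purely the ergodic decay estimate for the degenerate non-symmetric generator $L$, so I would invest the bulk of the argument in establishing that convergence-to-equilibrium bound and treat the telescoping combinatorics as a direct consequence.
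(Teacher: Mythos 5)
Your overall architecture is exactly the paper's: since $\mu$ is stationary, each marginal of $y^\varepsilon$ is exactly $\mu$, so the lemma is a pure decorrelation statement; one reduces to product test functions, unfolds the expectation via the Markov property and stationarity, and replaces each inner $P_{(t_{i+1}-t_i)/\varepsilon^2}g$ by $\int g\,d\mu$ using a quantitative convergence-to-equilibrium bound. (The paper writes this out only for $k=2$, reducing to centered $F,G$ and bounding
$\ABS{\int F P_{(t-s)/\varepsilon^2}G\,d\mu}\leq\NRM{F}_\infty\NRM{P_{(t-s)/\varepsilon^2}G}_{\dL^2(\mu)}$
via $\dL^2(\mu)\subset\dL^1(\mu)$, with the general $k$ declared ``entirely similar''; your telescoping over the $k-1$ gaps is the same computation, and it goes through because $P_t$ is an $\dL^\infty$ contraction. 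Your Stone--Weierstrass/monotone-class reduction to products is also sound, as products of bounded continuous functions are convergence-determining on a product of Polish spaces.) Where you diverge is precisely the step you flag as the crux: how to obtain the decay. The paper does \emph{not} use hypocoercivity. It exhibits the explicit Lyapunov function $H(\ote,\kappa)=1+\kappa^2$ for the adjoint generator, $L^*H\leq -H+2(\alpha^2+1)\mathds{1}_{|\kappa|\leq\sqrt{2\alpha^2+1}}$, observes that the corresponding level set $S^1\times\{|\kappa|\leq\sqrt{2\alpha^2+1}\}$ is a petite set, and invokes the Meyn--Tweedie-type result of \cite[Th.~2.1]{BCG} to get the $\dL^\infty\to\dL^2(\mu)$ bound $\NRM{P_tf}_{\dL^2(\mu)}\leq K_2\NRM{f}_\infty e^{-t}$ for bounded centered $f$, i.e.\ \eqref{eq:decay}. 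Your first proposed route (hypocoercivity for this kinetic Fokker--Planck operator, using compactness of $S^1$ and the Gaussian Poincar\'e inequality in $\kappa$) is a legitimate alternative and would also yield exponential $\dL^2$ decay; the Lyapunov route has the advantage of being elementary here and of extending painlessly to the sub-exponentially ergodic generalizations discussed in Remark \ref{rm:gene}.

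One concrete warning: your fallback route is circular within this paper's logic. The solvability of the Poisson equation, with $g=-\int_0^\infty P_sf\,ds\in\dL^2(\mu)$, is itself \emph{deduced} from the decay bound \eqref{eq:decay} in the proof of Theorem \ref{th:main}; without some a priori quantitative mixing you cannot even assert that $g(y)=-\dE\left(\int_0^\infty f(y_s)\,ds\,\bigr\vert\,y_0=y\right)$ is well defined, so ``Poisson solvability encodes the decay of correlations'' has the implication backwards. If you drop that fallback and establish the decay by either the Lyapunov/petite-set argument or hypocoercivity, your proof is complete and matches the paper's in structure.
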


\begin{proof}[Proof of Lemma \ref{le:chaos}]
  Let us denote by $L$ the operator \eqref{eq:L} acting this time on
  $2\pi$-periodic functions in $\theta$, i.e. on functions
  $S^1\times\dR\to\dR$. This operator $L$ generates a non-negative contraction
  semi-group $(P_t)_{t\geq0}=(e^{t L})_{t\geq0}$ in $\mathbb L^2(\mu)$ with
  the stochastic representation $P_t f(y)=\mathbb E[f(y_s)|y_0=y]$ for all
  bounded $f$. We denote by $L^*$ the adjoint of $L$ in $\mathbb L^2(\mu)$
  generating the adjoint semi-group $P_t^*$, i.e.
  $$
  L^*=\alpha^2\partial^2_\kappa-\kappa\partial_\kappa-\kappa\partial_\theta
  $$
  acting again on the same functions. The function
  $H(y)=H(\ote,\kappa)=1+\kappa^2$ satisfies
  \begin{equation}\label{eq:lyap}
    L^* H %
    = -2H+2(\alpha^2+1) %
    \leq -H + 2(\alpha^2+1)\,\mathds{1}_{|\kappa|\leq\sqrt{2\alpha^2+1}}
  \end{equation}
  so $H$ is a Lyapunov function in the sense of \cite[Def. 1.1]{BCG}. Since
  $C=S^1 \times \{|\kappa|\leq\sqrt{2\alpha^2+1}\}$ is compact and the process
  $(y_t)_{t\geq0}$ is regular enough, $C$ is a ``petite set'' in the
  terminology \cite[Def. 1.1]{BCG} of Meyn \& Tweedie. Accordingly we may
  apply \cite[Th. 2.1]{BCG} and conclude that there exists a constant $K_2>0$
  such that for all bounded $f$ satisfying $\int f d\mu=0$,
  \begin{equation}\label{eq:decay}
    \NRM{P_t f}_{\dL^2(\mu)} \leq K_2\,\NRM{f}_\infty e^{-t}.
  \end{equation}
  We shall give a proof of the Lemma for $k=2$, the general case $k\geq2$
  being heavier but entirely similar. We set $s=t_1< t_2=t$. It is enough to
  show that for every bounded continuous functions $F,G:S^1\times\dR\to\dR$,
  we have the convergence
  $$
  \lim_{\varepsilon\to0}\dE[F(y_s^\varepsilon)G(y_t^\varepsilon)] %
  = \dE[F(Y)]\dE[G(Y)]
  $$
  where $Y$ is a random variable of law $\mu$. Since $y_0$ follows the law
  $\mu$, we can safely assume that the functions $F$ and $G$ have zero
  $\mu$-mean, and reduce the problem to show that
  $$
  \dE[F(y_s^\varepsilon)G(y_t^\varepsilon)] %
  =\int\!P_{s/\varepsilon^2}(FP_{(t-s)/\varepsilon^2}G)\,d\mu %
  =\int\!FP_{(t-s)/\varepsilon^2}G\,d\mu %
  \underset{\varepsilon\to0}{\longrightarrow}0.
  $$
  Now since $\mu$ is a probability measure, we have
  $\mathbb{L}^2(\mu)\subset\mathbb{L}^1(\mu)$ and thus
  $$
  \ABS{\int\!FP_{(t-s)/\varepsilon^2}G\,d\mu} %
  \leq \NRM{FP_{(t-s)/\varepsilon^2}G}_1 %
  \leq \NRM{FP_{(t-s)/\varepsilon^2}G}_2 %
  \leq \NRM{F}_\infty\NRM{P_{(t-s)/\varepsilon^2}G}_2.
  $$
  The desired result follows then from the $\mathbb{L}^2-\mathbb{L}^\infty$
  bound \eqref{eq:decay} since
  $$
  \NRM{P_{(t-s)/\varepsilon^2}G}_{\dL^2(\mu)} %
  \leq K_2\NRM{G}_\infty e^{-(t-s)/\varepsilon^2} %
  \underset{\varepsilon\to0}{\longrightarrow}0.
  $$    
\end{proof}

\begin{proof}[Proof of Theorem \ref{th:main}]
  The strategy is the usual one based on It\^o's formula, Poisson equation,
  and a martingale CLT. However, each step involves some peculiar properties
  of the stochastic process. For convenience we split the proof into small
  parts with titles.

  \subsubsection*{Poisson equation} 

  Let $L$, $L^*$, and $(P_t)_{t\geq0}$ be as in the proof of Lemma
  \ref{le:chaos}. Since $f$ is bounded and satisfies $\int f d\mu = 0$ (i.e.
  $f$ has zero $\mu$-mean), the bound \eqref{eq:decay} ensures that
  $$
  g = - \int_0^{\infty}\!P_s f\,ds \quad\in \dL^2(\mu).
  $$
  Furthermore, the formula $P_t g-g=\int_0^t\,P_sf\,ds$ ensures that
  $$
  \lim_{t\to0}\frac{1}{t}(P_t g-g)=f\quad\text{strongly in $\dL^2(\mu)$.}
  $$
  It follows that $g$ belongs to the $\dL^2(\mu)$-domain of $L$ and satisfies
  to the Poisson equation:
  $$
  Lg=f\quad\text{in $\dL^2(\mu)$}.
  $$
  Since $\mu$ has an everywhere positive density with respect to the Lebesgue
  measure on $S^1 \times \dR$, we immediately deduce that $g$ belongs to the
  set of Schwartz distributions $\mathcal D'$ and satisfies $Lg=f$ in this
  set. Since $L$ is hypo-elliptic (it satisfies the H\"{o}rmander brackets
  condition) and $f$ is $C^\infty$, it follows that $g$ belongs to $C^\infty$.
  Hence we have solved the Poisson equation $Lg=f$ in a strong sense. Remark
  that since $g\in \dL^2(\mu)$ and $f$ is bounded, we get
  $$
  \dE_{\mu}[2 \alpha^2 \, |\partial_\kappa \, g|^2] %
  = - \dE_{\mu}[g Lg] %
  = - \dE_{\mu}[g f] <\infty.
  $$ 

  \subsubsection*{It\^o's formula} 

  Since $g$ is smooth, we may use It\^o's formula to get
  $$
  g(y_t) - g(y_0) = \int_0^t\!\alpha \sqrt 2\,\partial_\kappa g(y_s)\,dB_s %
  + \int_0^t\!Lg(y_s)\,ds %
  \quad \text{almost surely}
  $$
  which can be rewritten thanks to the Poisson equation $Lg=f$ as
  \begin{equation}\label{eq:ito}
    \int_0^t\!f(y_s) \, ds %
    = g(y_t) - g(y_0) %
    - \alpha \sqrt 2 \int_0^t\!\partial_\kappa g(y_s)\,dB_s %
    \quad \text{almost surely}.
  \end{equation}
  This last equation \eqref{eq:ito} reduces the CLT for the process
  $$
  \PAR{\varepsilon\int_0^{t/\varepsilon^2}\!f(y_s) \, ds}_{t\geq0}
  $$
  to showing that $(\varepsilon(g(y_{t/\varepsilon^2})-g(y_0)))_{t\geq0}$ goes
  to zero as $\varepsilon\to0$ and to a CLT for the process
  $$
  \PAR{\alpha \varepsilon\sqrt 2
    \int_0^{t/\varepsilon^2}\!\partial_\kappa g(y_s)\,dB_s}_{t\geq0}.
  $$
  For such, we shall use the initial conditions and a martingale argument
  respectively.

  \subsubsection*{Initial condition}

  Since the law $\mu$ of $y_0$ is stationary, Markov's inequality gives for
  any constant $K>0$,
  $$
  \dP(|g(y_{t/\varepsilon^2})|\geq K/\varepsilon) %
  = \dP(|g(y_{0})|\geq K/\varepsilon) %
  \leq \frac{\Var_{\mu}(g) \, \varepsilon^2}{K^2}
  \underset{\varepsilon\to0}{\longrightarrow}0.
  $$
  It follows that any n-uple of increments
  $$
  \varepsilon \, (g(y_{t_1})-g(y_{t_0}),\ldots,g(y_{t_n})-g(y_{t_{n-1}}))
  $$
  converges to $0$ in probability as $\varepsilon\to0$. Thanks to
  \eqref{eq:ito}, this reduces the CLT for
  $$
  \PAR{\varepsilon\int_0^{t/\varepsilon^2}\!f(y_s) \, ds}_{t\geq0}
  $$
  to the CLT for
  $$
  (M_t^\varepsilon)_{t\geq0}:=
  \PAR{\varepsilon \alpha\sqrt 2
    \int_0^{t/\varepsilon^2}\!\partial_\kappa g(y_s)\,dB_s}_{t\geq0}.
  $$

  \subsubsection*{Martingale argument} 

  It turns out that $((M_t^\varepsilon)_{t\geq0})_{\varepsilon>0}$ is a
  family of local martingales. These local martingales are actually $\dL^2$
  martingales whose brackets (increasing processes)
  $$
  \DP{M^\varepsilon}_t %
  = \varepsilon^2 2 \alpha^2 %
  \int_0^{t/\varepsilon^2}\!|\partial_\kappa g|^2(y_s)\,ds
  $$ 
  converge almost surely to
  $$
  2\alpha^2 t \dE_{\mu}[|\partial_\kappa \, g|^2]= t \, V_f
  \quad\text{as $\varepsilon \to 0$}
  $$
  thanks to the Ergodic Theorem \eqref{eq:erg}. According to the CLT for
  $\dL^2$-martingales due to Rebolledo, see for example \cite{helland} for an
  elementary proof, it follows that the family $(M_t^\varepsilon)_{t\geq0}$
  converges weakly (for the Skorohod topology) to $V_f \, (B_t^\tau)_{t\geq0}$
  where $(B_t^\tau)_{t\geq0}$ is a standard Brownian Motion. Consequently, we
  obtain the desired CLT for the process
  $$
  \PAR{\varepsilon \int_0^{t/\varepsilon^2}\!\!\!\!f(y_s)\,ds}_{t\geq0}.
  $$
  Namely, its increments are converging in distribution as $\varepsilon\to0$
  to the law of a Brownian Motion with variance $V_\tau$. It remains to obtain
  the desired CLT for the process ${(z_t^\varepsilon)}_{t\geq0}$.
  
  \subsubsection*{Coupling with propagation of chaos and asymptotic independence} 

  By the result above and Lemma \ref{le:chaos}, the CLT for
  ${(z_t^\varepsilon)}_{t\geq0}$ will follow if we show that
  $$
  \PAR{\varepsilon\int_0^{t/\varepsilon^2}\!\!\!\!f(y_s)\,ds}_{t\geq0} %
  \quad\text{and}\quad %
  (y_{t/\varepsilon^2})_{t\geq0} %
  ={(\ote_{t/\varepsilon^2},\kappa_{t/\varepsilon^2})}_{t\geq0}
  $$
  are independent processes as $\varepsilon\to0$. It suffices to establish the
  independence as $\varepsilon\to0$ for an arbitrary $k$-uple of times $0=t_0
  < t_1 <\cdots< t_k=t$. To this end, let us introduce a bounded continuous
  function $h$ and the smooth bounded functions
  $$
  h_j(u)=e^{\sqrt{-1}b_j u}
  $$
  where $1\leq j\leq k$ for given real numbers $b_1,\ldots,b_k$. Let us define
  $$
  A^{\varepsilon} %
  = \dE_{\mu}\SBRA{ %
    h(y_{t/\varepsilon^2}) %
      \prod_{j=1}^k h_j\PAR{\varepsilon %
        \int_{t_{j-1}/\varepsilon^2}^{t_{j}/\varepsilon^2} \, f(y_s) \, ds}}
  $$
  Introduce $t_\varepsilon =(t/\varepsilon^2) - (t/\sqrt{\varepsilon})$ and
  $s_\varepsilon = t/\sqrt{\varepsilon}$. For $\varepsilon$ small enough,
  $t_\varepsilon > (t_{k-1}\varepsilon^2)$, so that using the Markov property
  at time $t_\varepsilon$ we get
  $$
  A^{\varepsilon} %
  = \dE_{\mu}\SBRA{\prod_{j=1}^{k-1}h_j %
    \PAR{\varepsilon\int_{t_{j-1}/\varepsilon^2}^{t_{j}/\varepsilon^2}%
      \!f(y_s)\,ds} %
     \dE_{\mu}\SBRA{h(y_{t/\varepsilon^2}) h_k\PAR{\varepsilon\int_{t_{k-1}/\varepsilon^2}^{t_{k}/\varepsilon^2}\!f(y_s)\,ds}\,\biggr\vert\,\cF_{t_\varepsilon}}} 
  $$
  The conditional expectation in the right hand side is equal to
  $$
  h_k %
  \PAR{\varepsilon\int_{t_{k-1}/\varepsilon^2}^{t_\varepsilon}\!f(y_s)\,ds}%
  \dE\SBRA{h(y_{s_\varepsilon})h_k\PAR{\varepsilon\int_{0}^{s_\varepsilon}\!f(y_s)\,ds\,\biggr\vert\,y_0=y_{t_\varepsilon}}}
  $$
  and the second term can be replaced by
  $$
  \dE\SBRA{h(y_{s_\varepsilon})\,\biggr\vert\,y_0=y_{t_\varepsilon}}
  $$
  up to an error less than 
  $$
  \varepsilon \NRM{h}_\infty \NRM{f}_\infty s_\varepsilon
  $$
  going to $0$ as $\varepsilon \to 0$. It thus remains to study 
  $$
  \dE_{\mu}\SBRA{\prod_{j=1}^{k-1}h_j %
    \PAR{\varepsilon\int_{t_{j-1}/\varepsilon^2}^{t_{j}/\varepsilon^2} %
      \!f(y_s)\,ds} %
    h_k\PAR{\varepsilon\int_{t_{k-1}/\varepsilon^2}^{t_\varepsilon}\!f(y_s)\,ds}%
    \dE\SBRA{h(y_{s_\varepsilon})\,\biggr\vert\,y_0=y_{t_\varepsilon}}}.
  $$
  Conditioning by $y_{t_\varepsilon}$, this can be written in the form 
  $$
  \int\,H(\varepsilon,y)\,P_{s_\varepsilon} h(y)\,\mu(dy)
  $$ 
  with a bounded $H$, so that using the convergence of the semi-group, we may
  again replace $P_{s_\varepsilon} h$ by $\int h d\mu$ up to an error term
  going to $0$. It remains to apply the previously obtained CLT in order to
  conclude to the convergence and asymptotic independence.
\end{proof}

\begin{remark}[More general models]\label{rm:gene}
  The proof of Theorem \ref{th:main} immediately extends to more general
  cases. The main point is to prove that $g$ solves the Poisson equation in
  $\dL^2(\mu)$. In particular it is enough to have an estimate of the form
  $$
  \NRM{P_t f}_{\dL^2(\mu)} \leq \alpha(t)\,\NRM{f}_\infty
  $$
  for every $t\geq0$ with a function $\alpha$ satisfying
  $$
  \int_0^{\infty}\!\alpha(s)\,ds <\infty.
  $$
  According to \cite{BCG}, a sufficient condition for this to hold is to find
  a smooth increasing positive concave function $\varphi$ such that the
  function $\alpha$ defined by
  $$
  \alpha(t) = \frac{1}{(\varphi\circ G_\varphi^{-1})(t)} %
  \quad\text{where}\quad %
  G_\varphi(u) =\int_1^u\!\frac{1}{\varphi(s)}\,ds
  $$
  satisfies the integrability condition above, and a Lyapunov function $H\geq
  1$ such that
  $$
  \int\!H\,d\mu < \infty %
  \quad\text{and}\quad %
  L^* H \leq -\varphi(H) + O(\mathds{1}_C)
  $$ 
  for some compact subset $C$. In particular we may replace the
  Ornstein-Uhlenbeck dynamics for $\kappa$ by a more general Kolmogorov
  diffusion dynamics of the form
  $$
  d\kappa_t = - \nabla V(\kappa_t) dt + \sqrt 2 \, dB_t.
  $$
  The invariant measure of $(\kappa_t,\theta_t)_{t\geq0}$ is then
  $e^{-V(\kappa)}d\kappa d\theta$. We refer for instance to \cite{DFG,BCG} for
  the construction of Lyapunov functions in this very general situation. For
  example, in one dimension, one can take $V'(x)=|x|^p$ for large $|x|$ and
  $0<p\leq 1$. Choosing $H(y)=|\kappa|^q$ for large $\kappa$ furnishes a
  polynomial decay of any order by taking $q$ as large as necessary. Actually,
  in this last situation, the decay rate is sub--exponential, see for example
  \cite{DFG,BCG}.
\end{remark}

\begin{remark}[Asymptotic covariance] It is worth noticing that if the
  asymptotic variance
  $$
  (AV)_f = %
  \lim_{t \to\infty} \, %
  \frac{1}{t} \, \dE_{\mu}\SBRA{\PAR{\int_0^t\!f(y_s)\,ds}^2}
  $$
  exists, then $V_f = (AV)_f$. Similarly we may consider complex valued
  functions $f$ and replace the asymptotic variance by the asymptotic
  covariance matrix which takes into account the variances and the covariance
  of the real and imaginary parts of $f$.
\end{remark}

\begin{proof}[Proof of Corollary \ref{co:dm}]
  We may now apply the previous theorem and the previous remark to the
  $2$-dimensional smooth and $\mu$-centered function $\tau$. The only thing
  we have to do is to compute the asymptotic covariance matrix. To this end,
  first remark that elementary Gaussian computations furnishes the following
  explicit expressions
  \begin{align}
    \kappa_t & = e^{-t} \, \kappa_0 + \sqrt{2} \, \alpha \, \int_0^t \,
    e^{s-t} \, dB_s \, , \label{eq:explicit}\\
    \theta_t & = \theta_0 + (1-e^{-t}) \, \kappa_0 + \sqrt{2} \, \alpha \, \int_0^t \, (1-e^{s-t})
    \, dB_s \, . \label{eq:explicitbis}
  \end{align}
  Since $x_t^1=\int_0^t \, \cos \theta_s \, ds$ and $x_t^2=\int_0^t \, \sin
  \theta_s \, ds$, Markov's property and stationarity yield
  \begin{align}\label{eqcov}
    \dE_{\mu}[x_t^1 \, x_t^2] & =\dE_{\mu}\SBRA{%
      \int_0^t\!(x_s^1\sin\theta_s + x_s^2\cos\theta_s)\,ds} \\
    & =
    \dE_{\mu}\SBRA{\int_0^t\!\int_0^s\!(\cos\theta_u\sin\theta_s+\sin\theta_u\cos
      \theta_s)\,du\,ds} \nonumber \\
    & =
    \int_0^t\!\int_0^s\!\dE_{\mu}[\sin(\theta_u+\theta_s)]\,du\,ds\nonumber\\%
    & = \int_0^t\!\int_0^s\!\dE_{\mu}\SBRA{\sin\PAR{2\theta_0 + 2(1-e^{u-s})
        \kappa_0 +
        \sqrt{2}\alpha\int_0^{s-u}\!(1-e^{v-(s-u)})\,dB'_v}}\,du\,ds\nonumber
  \end{align}
  where $(B'_t)_{t\geq0}$ is a Brownian Motion independent of
  $(\kappa_0,\ote_0)$. Since $\kappa_0$ and $\ote_0$ are also
  independent (recall that $\mu$ is a product law), we may first integrate
  with respect to $\ote_0$ (fixing the other variables), i.e. we have to
  calculate $\mathbb E_\mu(\sin(2\ote_0 + C))$ which is equal to $0$ since
  the law $\mu$ of $\ote_0$ is uniform on $[0,2\pi[$. Hence the
  $\mu$-covariance of $(x_t^1,x_t^2)$ is equal to $0$ (since this is a
  Gaussian process, both variables are actually independent), and similar
  computations show that the asymptotic covariance matrix is thus $\dD I_2$
  where
  $$
  \dD = \int_0^{\infty}\!e^{- \, \alpha^2 (s-1+e^{-s})}\,ds.
  $$
\end{proof}

\begin{proof}[Proof of Theorem \ref{th:out}]
  We assume now that $y_0\sim\nu$ instead of $y_0\sim\mu$. We may mimic the
  proof of Theorem \ref{th:main}, provided we are able to control
  $\dE_\nu(g^2(y_s))$. Indeed the invariance principle for the local
  martingales $(M_t^\varepsilon)_{t\geq 0}$ is still true for the
  finite-dimensional convergence in law, according for instance to \cite[Th.
  3.6 p. 470]{JS}. The Ergodic Theorem ensures the convergence of the
  brackets. The first remark is that these controls are required only for
  $s\geq s_0 \geq 0$ where $s_0$ is fixed but arbitrary. Indeed since $\tau$
  is bounded, the quantity
  $$
  \varepsilon\int_0^{s_0}\!\tau(\theta_s)\,ds 
  $$
  goes to $0$ almost surely, so that we only have to deal with
  $\int_{s_0}^{t/\varepsilon^2}$ so that we may replace $0$ by $s_0$ in all
  the previous derivation. Thanks to the Markov property we thus have to
  control $\dE_{\nu_{s_0}} (g^2(y_s))$ for all $s>0$, where $\nu_{s_0}$ denote
  the law of $y_{s_0}$. This remark allows us to reduce the problem to initial
  laws which are absolutely continuous with respect to $\mu$. Indeed thanks
  to the hypo-ellipticity of $\frac{\partial}{\partial t} + L$ we know that
  for each $s_0 > 0$, $\nu_{s_0}$ is absolutely continuous with respect to
  $\mu$.
  % (we know much more of course, see below, but again we exhibit a quite
  % general argument).
  Hence we have to control terms of the form
  $$
  \dE_{\mu}\SBRA{\frac{d\nu_{s_0}}{d\mu}(y_0) \, g^2(y_s)}.
  $$
  The next remark is that \cite[Theorem 2.1]{BCG} immediately extends to the
  $\dL^p$ framework for $2\leq p <\infty$, i.e. there exists a constant $K_p$
  such that for all bounded $f$ satisfying $\int f d\mu=0$,
  \begin{equation}\label{eq:decay_p}
    \NRM{P_t f}_{\dL^p(\mu)} \,  \leq K_p \, \NRM{f}_\infty \, e^{-t}.
  \end{equation}
  Since the function $f$ is bounded and satisfies $\int f d\mu = 0$, the
  previous bound ensures that $g$ belongs to $\dL^p(\mu)$, for all $p<\infty$.
  In particular, as soon as $d\nu_{s_0}/d\mu$ belongs to $\dL^q(\mu)$ for some
  $1<q$, $g(y_s)$ belongs to $\dL^2(\dP_\nu)$ for all $s\geq s_0$.
  Additionally, these bounds allow to show without much efforts that the
  ``propagation of chaos'' of Lemma \ref{le:chaos} still holds when the
  initial law is such a $\nu$. To conclude we thus only have to find
  sufficient condition for $d\nu_{s_0}/d\mu$ to belong to one $\dL^q(\mu)$
  ($q>1$) for some $s_0\geq 0$. Of course, a first situation is when this
  holds for $s_0=0$. But there are many other situations.

  Indeed recall that for non-degenerate Gaussian laws $\eta_1$ and $\eta_2$
  the density $d\eta_2/d\eta_1$ is bounded as soon as the covariance matrix of
  $\eta_1$ dominates (in the sense of quadratic forms) the one of $\eta_2$ at
  infinity, i.e. the associated quadratic forms satisfy $q_1(y)>q_2(y)$ for
  $|y|$ large enough. According to \eqref{eq:explicit} and
  \eqref{eq:explicitbis} the joint law of $(\kappa_t,\theta_t)$ starting from a
  point $(\kappa,\theta)$ is a $2$-dimensional Gaussian law with
  mean $$m_t=(e^{-t} \kappa, \theta + (1-e^{-t}) \kappa)$$ and covariance
  matrix $D_t = \alpha^2 \, A_t$ with 
  $$
  A_t = %
  \begin{pmatrix}
    1-e^{-2t} & (1-e^{-t})^2 \\
    (1-e^{-t})^2 & 2t - 3 + 4e^{-t} - e^{-2t}
  \end{pmatrix}.
  $$
  Note that if the asymptotic covariance of $(\kappa_t,\theta_t)$ is not $0$,
  the asymptotic correlation vanishes, explaining the asymptotic
  ``decorrelation'' of both variables. It is then not difficult to see that if
  $\nu=\delta_y$ is a Dirac mass, then $d\nu_{s}/d\mu$ is bounded for every
  $s>0$. Indeed for $t$ small enough, $A_t$ is close to the null matrix, hence
  dominated by the identity matrix. It follows that $d\nu_t/d\eta$ is bounded,
  where $\eta$ is a Gaussian variable with covariance matrix $\alpha^2 I_2$.
  The result follows by taking the projection of $\theta$ onto the unit
  circle. A simple continuity argument shows that the same hold if $\nu$ is a
  compactly supported measure.  
\end{proof}

\begin{remark}
  Once obtained such a convergence theorem we may ask about explicit bounds
  (concentration bounds) in the spirit of \cite{CatGui2} (some bounds are
  actually contained in this paper). One can also ask about Edgeworth
  expansions etc. However, our aim was just to give an idea of the stochastic
  methods than can be used for models like the PTWM.
\end{remark}

\begin{remark}
  The most difficult point was to obtain $\dL^p(\mu)$ estimates for
  $\partial_\kappa g$. Specialists of hypo-elliptic partial differential
  equations will certainly obtain the result by proving quantitative versions
  of H\"{o}rmander's estimates (holding on compact subsets $U$):
  $$
  \NRM{\partial_\kappa g}_p \leq C(U) \, (\NRM{g}_p + \NRM{Lg}_p).
  $$  
\end{remark}

We end up the present paper by mentioning an interesting and probably
difficult direction of research, which consists in the study of the long time
behavior of interacting copies of PTWM--like processes, leading to some kind
of kinetic hypo-elliptic mean-field/exchangeable Mac Kean-Vlasov equations
(see for example \cite{MR1108185,MR1431299} and references therein for some
aspects). At the Biological level, the study of the collective behavior at
equilibrium of a group of interacting individuals is particularly interesting,
see for instance \cite{vicsek}.

{\footnotesize %

\bibliographystyle{plain}
\bibliography{poissons}

\begin{thebibliography}{10}

\bibitem{MR1845806}
C.~An{\'e}, S.~Blach{\`e}re, D.~Chafa{\"{\i}}, P.~Foug{\`e}res, I.~Gentil,
  F.~Malrieu, C.~Roberto, and G.~Scheffer.
\newblock {\em Sur les in\'egalit\'es de {S}obolev logarithmiques}, volume~10
  of {\em Panoramas et Synth\`eses [Panoramas and Syntheses]}.
\newblock Soci\'et\'e Math\'ematique de France, Paris, 2000.
\newblock With a preface by D. Bakry and M. Ledoux.

\bibitem{BCG}
D.~Bakry, P.~Cattiaux, and A.~Guillin.
\newblock Rate of convergence for ergodic continuous {M}arkov processes :
  {L}yapunov versus {P}oincar\'e.
\newblock {\em J. Func. Anal.}, 254:727--759, 2008.

\bibitem{casellas2008icd}
E.~Casellas, J.~Gautrais, R.~Fournier, S.~Blanco, M.~Combe, V.~Fourcassi{\'e},
  G.~Theraulaz, and C.~Jost.
\newblock From individual to collective displacements in heterogeneous
  environments.
\newblock {\em Journal of Theoretical Biology}, 250(3):424--434, 2008.

\bibitem{CatGui2}
P.~Cattiaux and A.~Guillin.
\newblock Deviation bounds for additive functionals of {M}arkov processes.
\newblock {\em {ESAIM} {P}robability and {S}tatistics}, 12:12--29, 2008.

\bibitem{codling:rwm}
E.~A. Codling, M.~J. Plank, and S.~Benhamou.
\newblock {Random walk models in biology}.
\newblock {\em Journal of The Royal Society Interface}, 2008.

\bibitem{DegSeb}
P.~Degond and S.~Motsch.
\newblock Large scale dynamics of the {P}ersistent {T}urning {W}alker {M}odel
  of fish behavior.
\newblock {\em Journal of Statistical Physics}, 131(6):989--1021, 2008.

\bibitem{MR997938}
J.-D. Deuschel and D.~W. Stroock.
\newblock {\em Large deviations}, volume 137 of {\em Pure and Applied
  Mathematics}.
\newblock Academic Press Inc., Boston, MA, 1989.

\bibitem{DFG}
R.~Douc, G.~Fort, and A.~Guillin.
\newblock Subgeometric rates of convergence of f-ergodic strong {M}arkov
  processes.
\newblock Preprint. Available on Mathematics ArXiv.math.ST/0605791, 2006.

\bibitem{MR2238823}
N.~Gantert, J.~Garnier, S.~Olla, Zh. Shi, and A.-S. Sznitman.
\newblock {\em Milieux al\'eatoires}, volume~12 of {\em Panoramas et
  Synth\`eses [Panoramas and Syntheses]}.
\newblock Soci\'et\'e Math\'ematique de France, Paris, 2001.
\newblock Edited by F. Comets and {\'E}. Pardoux.

\bibitem{gautetal}
J.~Gautrais, C.~Jost, M.~Soria, A.~Campo, S.~Motsch, R.~Fournier, S.~Blanco,
  and G.~Theraulaz.
\newblock {Analyzing fish movement as a persistent turning walker.}
\newblock {\em J Math Biol}, 2008.

\bibitem{HP}
M.~Hairer and A.~G. Pavliotis.
\newblock Periodic homogenization for hypoelliptic diffusions.
\newblock {\em J. Stat. Phys.}, 117(1):261--279, 2004.

\bibitem{helland}
I.~S. Helland.
\newblock Central limit theorem for martingales with discrete or continuous
  time.
\newblock {\em Scand. J. Statist.}, 9:79--94, 1982.

\bibitem{JS}
J.~Jacod and A.~N. Shiryaev.
\newblock {\em Limit theorems for stochastic processes}, volume 288 of {\em
  Grundlehren der {M}athematischen {W}issenschaften}.
\newblock Springer-Verlag., Berlin, 2003.

\bibitem{jeanson2003mam}
R.~Jeanson, S.~Blanco, R.~Fournier, J.L. Deneubourg, V.~Fourcassi{\'e}, and
  G.~Theraulaz.
\newblock A model of animal movements in a bounded space.
\newblock {\em Journal of Theoretical Biology}, 225(4):443--451, 2003.

\bibitem{MR2068475}
G.~L. Jones.
\newblock On the {M}arkov chain central limit theorem.
\newblock {\em Probab. Surv.}, 1:299--320 (electronic), 2004.

\bibitem{MR797411}
U.~Krengel.
\newblock {\em Ergodic theorems}, volume~6 of {\em de Gruyter Studies in
  Mathematics}.
\newblock Walter de Gruyter \& Co., Berlin, 1985.
\newblock With a supplement by Antoine Brunel.

\bibitem{Kut}
Y.~A. Kutoyants.
\newblock {\em Statistical inference for ergodic diffusion processes}.
\newblock Springer {S}eries in {S}tatistics. Springer-Verlag London Ltd.,
  London, 2004.

\bibitem{MR1431299}
S.~M{\'e}l{\'e}ard.
\newblock Asymptotic behaviour of some interacting particle systems;
  {M}c{K}ean-{V}lasov and {B}oltzmann models.
\newblock In {\em Probabilistic models for nonlinear partial differential
  equations ({M}ontecatini {T}erme, 1995)}, volume 1627 of {\em Lecture Notes
  in Math.}, pages 42--95. Springer, Berlin, 1996.

\bibitem{PV1}
E.~Pardoux and A.~Y. Veretennikov.
\newblock On the {P}oisson equation and diffusion approximation. {I}.
\newblock {\em Ann. Prob.}, 29(3):1061--1085, 2001.

\bibitem{PV2}
E.~Pardoux and A.~Y. Veretennikov.
\newblock On the {P}oisson equation and diffusion approximation. {II}.
\newblock {\em Ann. Prob.}, 31(3):1116--1192, 2003.

\bibitem{PV3}
E.~Pardoux and A.~Y. Veretennikov.
\newblock On the {P}oisson equation and diffusion approximation. {III}.
\newblock {\em Ann. Prob.}, 33(3):1111--1133, 2005.

\bibitem{MR1725357}
D.~Revuz and M.~Yor.
\newblock {\em Continuous martingales and {B}rownian motion}, volume 293 of
  {\em Grundlehren der Mathematischen Wissenschaften [Fundamental Principles of
  Mathematical Sciences]}.
\newblock Springer-Verlag, Berlin, third edition, 1999.

\bibitem{royer}
G.~Royer.
\newblock {\em Une initiation aux in{\'e}galit{\'e}s de {S}obolev
  logarithmiques}.
\newblock Soci{\'e}t{\'e} Math{\'e}matique de France, Paris, 1999.

\bibitem{MR1108185}
A.-S. Sznitman.
\newblock Topics in propagation of chaos.
\newblock In {\em \'{E}cole d'\'{E}t\'e de {P}robabilit\'es de {S}aint-{F}lour
  {XIX}---1989}, volume 1464 of {\em Lecture Notes in Math.}, pages 165--251.
  Springer, Berlin, 1991.

\bibitem{theraulaz2002spa}
G.~Theraulaz, E.~Bonabeau, S.~C. Nicolis, R.~V. Sole, V.~Fourcassi{\'e},
  S.~Blanco, R.~Fournier, J.~L. Joly, P.~Fernandez, A.~Grimal, et~al.
\newblock {Spatial patterns in ant colonies}.
\newblock {\em Proceedings of the National Academy of Sciences}, 99(15):9645,
  2002.

\bibitem{vicsek}
T.~Vicsek.
\newblock A question of scale.
\newblock {\em Nature}, 411(421), 2001.
\newblock DOI: \texttt{doi:10.1038/35078161}.

\bibitem{MR1637655}
L.-S. Young.
\newblock Statistical properties of dynamical systems with some hyperbolicity.
\newblock {\em Ann. of Math. (2)}, 147(3):585--650, 1998.

\end{thebibliography}

\bigskip

\vfil

\bigskip

\noindent
Patrick \textsc{Cattiaux},
E-mail: \texttt{cattiaux(AT)math.univ-toulouse.fr}\\
S\'ebastien
\textsc{Motsch},
E-mail: \texttt{sebastien.motsch(AT)math.univ-toulouse.fr}

\smallskip

\noindent
\textsc{UMR5219 CNRS \& Institut de Math\'ematiques de Toulouse} \\
\textsc{Universit\'e de Toulouse}\\
\textsc{118 route de Narbonne, F-31062 Toulouse Cedex 09, France.}\\

\noindent
Djalil~\textsc{Chafa{\"{\i}}},
E-mail: \texttt{chafai(AT)math.univ-toulouse.fr} 

\smallskip
\noindent
\textsc{UMR181 INRA ENVT \& Institut de Math\'ematiques de Toulouse} \\
\textsc{\'Ecole Nationale V\'et\'erinaire de Toulouse, Universit\'e de
  Toulouse.}\\
\textsc{23 chemin des Capelles, F-31076 Toulouse, Cedex 3, France.}

\vfill

\begin{flushright}
  \texttt{Compiled \today}
\end{flushright}

} %footnotesize

\end{document}